\documentclass[11pt]{amsart}

\usepackage{amsmath,amssymb,amsthm,amscd}
\usepackage[frame,cmtip,arrow,matrix,line,graph,curve]{xy}
\usepackage{graphpap,color}
\usepackage{hyperref}

\def\dfrac{\displaystyle\frac}

\newtheorem{prop}{Proposition}[section]
\newtheorem{theo}[prop]{Theorem}
\newtheorem{lemm}[prop]{Lemma}
\newtheorem{coro}[prop]{Corollary}

\newtheorem*{rema}{Remark}

\renewcommand{\geq}{\geqslant}
\renewcommand{\le}{\leqslant}
\renewcommand{\ge}{\geqslant}

\newcommand{\tr}{\operatorname{tr}}

\allowdisplaybreaks[1]

\numberwithin{equation}{section}

\title{Interior $C^2$ Estimates for 2-Sum Hessian Equations}

\author{Changyu Ren}
\address{School of Mathematical Science\\
	Jilin University\\ Changchun\\ China}
\email{rency@jlu.edu.cn}
\author{Xianlu Sun}
\address{School of Mathematics and Statistics\\
	Northeast Normal University\\ Changchun\\ China}
\email{sunxl221@nenu.edu.cn}

\begin{document}

	\begin{abstract}
		This paper establishes an interior $C^2$ estimate for solutions of
		\[
		\sigma_2(D^2u)+\alpha(x)\sigma_1(D^2u)=f(x,u,Du)>0,
		\]
		on the positive branch $\Delta u>0$, under the assumptions $\alpha\ge0$ and $D^2\alpha=\beta I$. Using one extra variable and a quartic correction, the equation is reduced to a quadratic Hessian equation in one higher dimension. The terms involving $D^2u$ cancel, so the interior estimate of Li--Wu \cite{LW} applies. The gradient-independent case is also treated. Finally, an example shows that an $L^\infty$ bound on $\alpha$ alone is not enough.
	\end{abstract}
	\maketitle

	\section{Introduction}
	
	Consider the following sum Hessian equation in $B_2^n\subset\mathbb R^n$, $n\ge2$:
	\begin{equation}\label{eq:1.1}
		\sigma_2(D^2u)+\alpha(x)\sigma_1(D^2u)=f(x,u,Du),\qquad x\in B_2^n.
	\end{equation}
	Here $\sigma_k(D^2u)=\sigma_k(\lambda(D^2u))$ is the $k$-th elementary symmetric function of the eigenvalues of $D^2u$. Assume
	\[
	f(x,u,Du)>0,
	\]
	and
	\begin{equation}\label{eq:1.2}
		\alpha(x)=a_0+b\cdot x+\frac{\beta}{2}|x|^2\ge0,
		\qquad x\in B_2^n,
	\end{equation}
	where $a_0,\beta\in\mathbb R$ and $b\in\mathbb R^n$.
	
	The notation used below is as follows. Let $B_r^n(y)$ denote the ball in $\mathbb R^n$ centered at $y$ with radius $r$, and set $B_r^n:=B_r^n(0)$. Let $\operatorname{Sym}(n)$ denote the space of all $n\times n$ real symmetric matrices. For $A\in\operatorname{Sym}(n)$, denote its eigenvalues by
	\[
	\lambda(A)=(\lambda_1,\cdots,\lambda_n)
	\]
	and denote
	\[
	\sigma_k(A):=\sigma_k(\lambda(A)).
	\]
	In particular,
	\[
	\sigma_1(A)=\operatorname{tr}A=\sum_{i=1}^n\lambda_i,\qquad
	\sigma_2(A)=\sum_{1\le i<j\le n}\lambda_i\lambda_j.
	\]
	Accordingly, a function
	$u\in C^2(\Omega)$ is called $k$-admissible if
	\[
	\lambda(D^2u(x))\in\Gamma_k.
	\]
	The G\r{a}rding cone is
	\[
	\Gamma_k=\bigl\{\lambda\in\mathbb R^n\mid \sigma_m(\lambda)>0,\ m=1,\cdots,k\bigr\}.
	\]
	A symmetric matrix $A$ is said to belong to $\Gamma_k$ whenever $\lambda(A)\in\Gamma_k$. For $A=(a_{ij})$, let
	\[
	|A|^2=\sum_{i}\lambda_{i}^2(A).
	\]
	When $\alpha=0$, equation~\eqref{eq:1.1} becomes the $2$-Hessian equation
	\[
	\sigma_2(D^2u)=f(x,u,Du).
	\]

	The classical $k$-Hessian equation
	\begin{equation}\label{eq:1.3}
		\sigma_k(D^2u)=f(x,u,Du),\qquad x\in\Omega,
	\end{equation}
	and the related curvature equation
	\begin{equation}\label{eq:1.4}
		\sigma_k(\kappa( X))=\psi( X,\nu),\qquad  X\in M,
	\end{equation}
	are basic fully nonlinear equations in PDE and geometric analysis. They appear in many geometric problems; see, for example, \cite{CNS3,CNS4,GLM,GLL,GRW,O,P3} and the references therein.

	Interior $C^2$ estimates are a basic part of the regularity theory for fully nonlinear equations. For the Monge--Amp\`ere equation, Heinz \cite{H} proved the interior $C^2$ estimate in dimension $n=2$. In dimensions $n\ge3$, Pogorelov \cite{P3} showed that a general interior $C^2$ estimate may fail and obtained the well-known Pogorelov estimate.
	
	For $k$-Hessian equations, Urbas \cite{UE} extended Pogorelov's example and showed that, for $k\ge3$, solutions of $\sigma_k(D^2u)=f$ need not satisfy a general interior $C^2$ estimate. The case $k=2$ is therefore special, and interior $C^2$ estimates for the $2$-Hessian equation have received much attention.
	
	For $\sigma_2(D^2u)=1$, Warren--Yuan \cite{WY} proved the interior estimate in dimension $n=3$, and Shankar--Yuan \cite{RY2} treated dimension $n=4$. In higher dimensions, McGonagle--Song--Yuan \cite{MSY} obtained the estimate for almost convex solutions, while Shankar--Yuan \cite{RY1,RY2} considered semiconvex and dynamically semiconvex solutions. The dynamic semiconvexity condition is
	\begin{equation}\label{eq:1.5}
		\lambda_{\text{min}} \geq
		-c(n)\Delta u  \quad {\rm with}\quad  c(n) = \dfrac{\sqrt{3n^2 + 1}
			- n + 1}{2n}. \end{equation}
	
	For $\sigma_2(D^2u)=f$ and for scalar curvature equations, Guan--Qiu \cite{GQ} obtained an interior estimate under the condition $\sigma_3(D^2u)>-C_0$ for some constant $C_0>0$. Qiu \cite{Qiu1,Qiu} treated the three-dimensional case. More recently, Fan \cite{F} obtained estimates in dimension $n=4$ and in higher dimensions under the dynamic semiconvexity condition \eqref{eq:1.5}. Li--Wu \cite{LW} proved interior $C^2$ estimates for the quadratic Hessian equation in arbitrary dimensions. Further interior estimates for Hessian equations can be found in \cite{CHO,CCQ,HJZ,LSY}.

	Equation~\eqref{eq:1.1} is a sum Hessian equation. Such equations combine elementary symmetric functions of the Hessian and form a natural extension of the usual $k$-Hessian equation. Related equations were studied in \cite{Dong,DW,GZ,Kry,LRW,RL}. They also arise in geometric problems; see, for example, Harvey--Lawson \cite{HL80} and Espinar--G\'alvez--Mira \cite{EGM}.

	The main result is the following interior $C^2$ estimate for \eqref{eq:1.1}.
	
	\begin{theo}\label{thm:1.1}
		Let $\alpha$ satisfy \eqref{eq:1.2}, and let $f\in C^{1,1}_{\mathrm{loc}}(B_2^n\times\mathbb R\times\mathbb R^n)$ satisfy
		\[
		f(x,z,p)\ge f_0>0,\qquad \forall (x,z,p)\in B_2^n\times\mathbb R\times\mathbb R^n.
		\]
		Suppose that $u\in C^\infty(B_2^n)$ is a solution of \eqref{eq:1.1}.
		Then
		\begin{equation}\label{eq:1.6}
			\sup_{B_{1/2}^n}|D^2u|\le C.
		\end{equation}
		Here $C>0$ depends only on $n$, $\|u\|_{C^1(B_2^n)}$, $\|\alpha\|_{C^2(B_2^n)}$, $\|f\|_{C^{1,1}}$, and $\|1/f\|_{L^\infty}$.
	\end{theo}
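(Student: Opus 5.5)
The idea is to absorb the lower-order term $\alpha\sigma_1(D^2u)$ by adding one extra variable $t$. The resulting function $v$ on $\mathbb R^{n+1}$ then solves a pure quadratic Hessian equation $\sigma_2(D^2v)=F(y,v,Dv)$, to which the interior estimate of Li--Wu \cite{LW} applies. I would look for $v(x,t)=u(x)+g(x,t)$. Write $G=D_x^2g$, $a=g_{tt}$ and $b=\partial_tD_xg$. Then
\[
D^2v=\begin{pmatrix} D^2u+G & b\\ b^T & a\end{pmatrix},
\]
and expanding gives
\[
\sigma_2(D^2v)=\sigma_2(D^2u)+\operatorname{tr}\bigl((\sigma_1(G)+a)I-G\bigr)D^2u+\sigma_2(G)+a\,\sigma_1(G)-|b|^2 ,
\]
where the middle term means $\operatorname{tr}\bigl[\bigl((\sigma_1(G)+a)I-G\bigr)D^2u\bigr]$. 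The goal is to make the matrix multiplying $D^2u$ equal to $\alpha I$. This forces $G=\gamma I$ and $(n-1)\gamma+g_{tt}=\alpha$. Since $D^2\alpha=\beta I$, the choice
\[
g(x,t)=\frac{\alpha(x)}{2}t^2-\frac{(n-1)\beta}{12}t^4
\]
works: it gives $G=\frac{\beta t^2}{2}I$ and $g_{tt}=\alpha-\frac{(n-1)\beta}{2}t^2$, so the $D^2u$-terms reduce exactly to $\alpha\,\sigma_1(D^2u)$. Consequently
\[
\sigma_2(D^2v)=f(x,u,Du)+h(x,t),
\]
where $h=\sigma_2(G)+g_{tt}\sigma_1(G)-t^2|D\alpha|^2$ is an explicit polynomial in $(x,t)$. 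It satisfies $h=O(t^2)$, with constants controlled by $\|\alpha\|_{C^2}$.

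Next I would rewrite the right-hand side in terms of $v$. Since $u=v-g$ and $Du=D_xv-D_xg$, with $g$ and $D_xg$ explicit in $(x,t)$, we get $F(x,t,z,p,q)=f(x,z-g,p-D_xg)+h(x,t)$. This $F$ is $C^{1,1}_{\mathrm{loc}}$, with norms bounded by $\|f\|_{C^{1,1}}$ and $\|\alpha\|_{C^2}$; because $\alpha$ is quadratic, all its derivatives are controlled. Choose $\delta>0$, depending on $n$, $\|\alpha\|_{C^2}$ and $f_0$, so that $|h|\le f_0/2$ for $|t|\le\delta$. Then $F\ge f_0/2$ on $B_{3/2}^n\times(-\delta,\delta)$. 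Moreover $\|v\|_{C^1}$ there is bounded by $\|u\|_{C^1}$ and $\|\alpha\|_{C^1}$.

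Admissibility: I must check that $\lambda(D^2v)\in\Gamma_2$ and not merely $\sigma_2>0$. At $t=0$ we have $\sigma_1(D^2v)=\Delta u+\alpha>0$, by the positive-branch assumption and $\alpha\ge0$. For fixed $x$, the path $t\mapsto\lambda(D^2v(x,t))$ stays in $\{\sigma_2>0\}$, and $\Gamma_2$ is a connected component of that set. Hence, by continuity, $D^2v\in\Gamma_2$ for all $|t|<\delta$.

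Finally, I apply Li--Wu's interior estimate on small balls $B^{n+1}_{\delta}((x_0,0))\subset B^n_{3/2}\times(-\delta,\delta)$ for $x_0\in B_{1/2}^n$, after rescaling each ball to unit size. Rescaling changes the constants only by factors of $\delta$. This bounds $|D^2v(x_0,0)|$, and since $G(x,0)=0$, $b(x,0)=0$ and $a(x,0)=\alpha$, the block $D^2u(x_0)$ is bounded, which is \eqref{eq:1.6}. The step needing most care is checking that the hypotheses of \cite{LW} hold exactly in the form stated there: the $C^{1,1}$ dependence of $F$ on all its arguments, the uniform positive lower bound, and the dependence of their constant only on $\|v\|_{C^1}$ and the structural data. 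I would also verify that rescaling the $t$-direction in the cylinder introduces no dependence beyond $\delta$, which itself depends only on the allowed quantities.
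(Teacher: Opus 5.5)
Your proposal is essentially the paper's proof: the same lifting $u+\frac12\alpha t^2$ with a quartic correction in $t$, the same error term depending only on $(x,t)$, the same thin-cylinder positivity and continuity-in-$t$ argument for $\Gamma_2$-admissibility, and the same rescaled application of Li--Wu at $(x_0,0)$. There is one arithmetic slip: since $\partial_t^2 t^4=12t^2$, obtaining $g_{tt}=\alpha-\frac{(n-1)\beta}{2}t^2$ requires the correction $-\frac{(n-1)\beta}{24}t^4$, as in the paper; your $-\frac{(n-1)\beta}{12}t^4$ gives $g_{tt}=\alpha-(n-1)\beta t^2$, which leaves an uncancelled term $-\frac{(n-1)\beta}{2}t^2\Delta u$ on the right-hand side.
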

	
	\begin{rema}
		In the paper, $C$ denotes a positive constant which may change from line to line, with its dependence specified when needed. No convexity of $u$ or integral bound for $D^2u$ is assumed.
	\end{rema}
	
	\begin{rema}
		The proof reduces \eqref{eq:1.1} to the interior Hessian estimate of Li--Wu \cite{LW}. The positivity of $f$ is needed only on the bounded range arising in the lifting argument, and the corresponding lower bound is represented by $\|1/f\|_{L^\infty}$.
	\end{rema}
	The proof uses a one-dimensional lifting and the interior estimate of Li--Wu \cite{LW}. The paper is organized as follows. Section 1 introduces the equation
	and states the main result. Section 2 records the basic identities,
	ellipticity, and the Li--Wu interior estimate. Section 3 proves
	Theorem~\ref{thm:1.1} by a one-dimensional lifting and treats the
	gradient-independent case. Section 4 gives examples and discusses
	the coefficient condition.
	
	$\text{\bf Declaration:}$
	AI was used only to assist with the counterexample in Section 4.3 and text polishing. All mathematical content was independently verified by the author. 
	
	\section{Preliminaries}
	Several elementary identities and the interior estimate for the quadratic Hessian equation are recorded first.
	
	\begin{lemm}\label{lem:2.1}
		Let $A\in\operatorname{Sym}(n)$. Then
		\begin{equation}\label{eq:2.1}
			2\sigma_2(A)=(\operatorname{tr}A)^2-|A|^2.
		\end{equation}
		Moreover, for $\gamma,d\in\mathbb R$ and $\xi\in\mathbb R^n$,
		\begin{equation}\label{eq:2.2}
			\sigma_2(A+\gamma I)
			=\sigma_2(A)+(n-1)\gamma\operatorname{tr}A
			+\frac{n(n-1)}{2}\gamma^2,
		\end{equation}
		\begin{equation}\label{eq:2.3}
			\sigma_2\begin{pmatrix} A & \xi \\ \xi^{\mathrm T} & d \end{pmatrix}
			=\sigma_2(A)+d\operatorname{tr}A-|\xi|^2.
		\end{equation}
	\end{lemm}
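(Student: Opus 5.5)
The plan is to work with the definition $\sigma_2(A)=\sum_{i<j}\lambda_i\lambda_j$ and rewrite everything through the two basis-invariant quantities $\operatorname{tr}A$ and $|A|^2=\sum_i\lambda_i^2$. For a symmetric matrix, $|A|^2$ equals $\operatorname{tr}(A^2)=\sum_{i,j}a_{ij}^2$, because $A^2$ has eigenvalues $\lambda_i^2$. Identity \eqref{eq:2.1} then follows from expanding $(\sum_i\lambda_i)^2=\sum_i\lambda_i^2+2\sum_{i<j}\lambda_i\lambda_j$. From now on I use \eqref{eq:2.1} as the working formula $\sigma_2(A)=\tfrac12\bigl((\operatorname{tr}A)^2-\operatorname{tr}(A^2)\bigr)$. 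This formula is valid for every symmetric matrix of any size and needs no diagonalization.

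For \eqref{eq:2.2}, I apply this formula to $A+\gamma I$. We have $\operatorname{tr}(A+\gamma I)=\operatorname{tr}A+n\gamma$ and $\operatorname{tr}((A+\gamma I)^2)=\operatorname{tr}(A^2)+2\gamma\operatorname{tr}A+n\gamma^2$. Substituting gives
\[
\tfrac12\bigl[(\operatorname{tr}A)^2+2n\gamma\operatorname{tr}A+n^2\gamma^2-\operatorname{tr}(A^2)-2\gamma\operatorname{tr}A-n\gamma^2\bigr]
=\sigma_2(A)+(n-1)\gamma\operatorname{tr}A+\tfrac{n(n-1)}2\gamma^2 .
\]
As a check, shifting each eigenvalue by $\gamma$ gives the same result directly.

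For \eqref{eq:2.3}, let $M$ be the $(n+1)\times(n+1)$ bordered matrix. Then $\operatorname{tr}M=\operatorname{tr}A+d$. The diagonal entries of $M^2$ are $(A^2)_{ii}+\xi_i^2$ for $i\le n$ and $|\xi|^2+d^2$ in the last position. Hence $\operatorname{tr}(M^2)=\operatorname{tr}(A^2)+2|\xi|^2+d^2$. Substituting into the formula gives
\[
\tfrac12\bigl[(\operatorname{tr}A)^2+2d\operatorname{tr}A+d^2-\operatorname{tr}(A^2)-2|\xi|^2-d^2\bigr]=\sigma_2(A)+d\operatorname{tr}A-|\xi|^2.
\]

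The only step that needs care is justifying $|A|^2=\operatorname{tr}(A^2)=\sum_{i,j}a_{ij}^2$ for symmetric $A$, together with the fact that $\sigma_2$ of the eigenvalues can be computed from the matrix entries. Both follow from orthogonal diagonalization, since trace and Frobenius norm are invariant under conjugation. Alternatively, $\sigma_2(M)$ is the sum of the principal $2\times2$ minors of $M$. This gives a direct check of \eqref{eq:2.3}: the minors lying inside $A$ contribute $\sigma_2(A)$, and the minors $\det\begin{pmatrix}a_{ii}&\xi_i\\ \xi_i&d\end{pmatrix}$ contribute $d\operatorname{tr}A-|\xi|^2$.
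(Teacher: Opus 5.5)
Your proof is correct and essentially matches the paper's: \eqref{eq:2.1} comes from expanding the square of the eigenvalue sum, and \eqref{eq:2.3} from computing the trace and Frobenius norm of the bordered matrix, $|M|^2=|A|^2+2|\xi|^2+d^2$, and substituting into \eqref{eq:2.1}. The only difference is in \eqref{eq:2.2}: the paper expands $\sum_{i<j}(\mu_i+\gamma)(\mu_j+\gamma)$ directly, which is what you use as a check, while you substitute into the trace formula instead; both are immediate.
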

	
	\begin{proof}
		Since the quantities in the statement are invariant under orthogonal changes of coordinates, one may assume that $A=\operatorname{diag}(\mu_1,\cdots,\mu_n)$.
		
		\textbf{(1) Proof of \eqref{eq:2.1}.}
		Expanding the square gives
		\[
		(\operatorname{tr} A)^2
		= \biggl(\sum_{i=1}^n \mu_i\biggr)^2
		= \sum_{i=1}^n \mu_i^2 + 2\sum_{1\le i<j\le n}\mu_i\mu_j.
		\]
		Since $|A|^2=\sum_i\mu_i^2$ and $\sigma_2(A)=\sum_{i<j}\mu_i\mu_j$, it follows that
		\[
		(\operatorname{tr} A)^2 = |A|^2 + 2\sigma_2(A),
		\]
		which proves \eqref{eq:2.1}.
		
		\textbf{(2) Proof of \eqref{eq:2.2}.}
		The eigenvalues of $A+\gamma I$ are $\mu_i+\gamma$. By definition,
		\[
		\begin{aligned}
			\sigma_2(A+\gamma I)
			& = \sum_{i<j}(\mu_i+\gamma)(\mu_j+\gamma) \\
			& = \sum_{i<j}\mu_i\mu_j
			+\gamma\sum_{i<j}(\mu_i+\mu_j)
			+\gamma^2\sum_{i<j}1.
		\end{aligned}
		\]
		The first term is $\sigma_2(A)$. In the second term, each $\mu_k$ appears $n-1$ times. Hence
		\[
		\sum_{i<j}(\mu_i+\mu_j)
		=(n-1)\sum_{k=1}^n\mu_k
		=(n-1)\operatorname{tr}A.
		\]
		Also, $\sum_{i<j}1=\binom n2=n(n-1)/2$. This proves \eqref{eq:2.2}.
		
		\textbf{(3) Proof of \eqref{eq:2.3}.}
		Let
		\[
		A_0=\begin{pmatrix} A & \xi \\ \xi^{\mathrm T} & d \end{pmatrix}.
		\]
		Its trace and matrix norm satisfy
		\[
		\operatorname{tr}A_0=\operatorname{tr}A+d,\qquad
		|A_0|^2=|A|^2+2|\xi|^2+d^2.
		\]
		Substitution of these identities into \eqref{eq:2.1} yields
		\[
		\begin{aligned}
			2\sigma_2(A_0)
			& =(\operatorname{tr}A+d)^2-\bigl(|A|^2+2|\xi|^2+d^2\bigr) \\
			& =\bigl((\operatorname{tr}A)^2-|A|^2\bigr)
			+2d\operatorname{tr}A-2|\xi|^2                              \\
			& =2\sigma_2(A)+2d\operatorname{tr}A-2|\xi|^2.
		\end{aligned}
		\]
		Dividing by $2$ gives \eqref{eq:2.3}.
	\end{proof}
	\begin{lemm}\label{lem:2.2}
		Let $\Omega\subset\mathbb R^n$ be a connected open domain with $n\ge 2$. If $\alpha\in C^3(\Omega)$ and $D^2\alpha=\ell(x)I$, then $\ell$ is a constant $\beta$, and
		\[
		\alpha(x)=a_0 + b\cdot x + \frac{\beta}{2}|x|^2.
		\]
	\end{lemm}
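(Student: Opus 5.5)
The plan is to show first that $\ell$ has vanishing gradient, using the symmetry of third derivatives of $\alpha$. Then I will integrate twice. Since $\alpha\in C^3(\Omega)$, the function $\ell=\alpha_{11}$ is $C^1$, and the identity $\alpha_{ij}=\ell\,\delta_{ij}$ holds for all $i,j$.

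For the first step, fix $k$. Choose an index $i\ne k$, which exists because $n\ge2$. Differentiate $\alpha_{ii}=\ell$ in $x_k$ and $\alpha_{ik}=0$ in $x_i$. Equality of mixed partials for $C^3$ functions gives
\[
\partial_k\ell=\alpha_{iik}=\alpha_{iki}=\partial_i(\alpha_{ik})=0 .
\]
Hence $D\ell\equiv0$ on $\Omega$. Because $\Omega$ is connected, $\ell$ equals a constant $\beta$. This is the only step where the hypotheses $n\ge2$ and connectedness matter. It is also the main point, though it is not difficult: in dimension one, $D^2\alpha=\ell(x)I$ imposes no restriction on $\ell$.

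Next, set $g(x)=\alpha(x)-\frac{\beta}{2}|x|^2$. Then $D^2g=\beta I-\beta I=0$ on $\Omega$. So $Dg$ has zero derivative, and by connectedness $Dg\equiv b$ for some constant vector $b\in\mathbb R^n$.

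Finally, apply the same argument to $h(x)=g(x)-b\cdot x$, which satisfies $Dh\equiv0$. Connectedness gives $h\equiv a_0$, so
\[
\alpha(x)=a_0+b\cdot x+\frac{\beta}{2}|x|^2 .
\]
Throughout, I use the standard fact that a $C^1$ function with zero gradient on a connected open set is constant. This holds because the set where the function equals a given value is both open and closed.
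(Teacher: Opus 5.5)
Your proposal is correct and follows essentially the same argument as the paper. You use commuting third derivatives with an index $i\ne k$ to get $D\ell=0$, then integrate twice on the connected domain via $g=\alpha-\frac{\beta}{2}|x|^2$; the only difference is that you spell out the final step through $h=g-b\cdot x$, which the paper leaves implicit.
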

	
	\begin{proof}
		The identity $D^2\alpha=\ell(x)I$ gives $\alpha_{ij}=0$ for $i\ne j$ and $\alpha_{ii}=\ell$. Fix $j$ and choose $i\ne j$. Since $\alpha\in C^3$, mixed third derivatives commute, and therefore
		\[
		\ell_j=\partial_j \alpha_{ii}=\partial_i \alpha_{ij}=\partial_i 0=0.
		\]
		Thus $D\ell=0$. Since $\Omega$ is connected, $\ell$ is a constant, say $\ell=\beta$.
		
		Now we define $g=\alpha-\frac{\beta}{2}|x|^2$. Then $D^2g=0$, so $Dg$ is constant. Hence $g=a_0+b\cdot x$, and the stated form of $\alpha$ follows.
	\end{proof}
	Following the notation for sum Hessian equations, we denote
	\[
	S_2(x,\lambda):=\sigma_2(\lambda)+\alpha(x)\sigma_1(\lambda).
	\]
	The admissible 
	solution set of Sum Hessian operator $S_2(x,\lambda)$ is
	\[
	\widetilde{\Gamma}_2(x)
	=\Gamma_1\cap\bigl\{\lambda\in\mathbb R^n\mid S_2(x,\lambda)>0\bigr\}.
	\]
	Thus, on the positive branch,
	\[
	\Delta u>0,\qquad
	\lambda(D^2u)\in\widetilde{\Gamma}_2(x).
	\]
	The next lemma shows the ellipticity on this admissible set.
	
	\begin{lemm}\label{lem:2.3}
		Assume $\alpha\ge0$, $f>0$, and $\Delta u>0$. Then the linearized matrix of \eqref{eq:1.1},
		\begin{equation}\label{eq:2.4}
			F^{ij}=(\Delta u+\alpha)\delta_{ij}-u_{ij}
		\end{equation}
		is positive definite.
	\end{lemm}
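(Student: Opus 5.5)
Note first that $F^{ij}=\partial S_2/\partial u_{ij}$. Since $\partial\sigma_2/\partial u_{ij}=(\operatorname{tr}A)\delta_{ij}-u_{ij}$ and $\partial\sigma_1/\partial u_{ij}=\delta_{ij}$, this gives \eqref{eq:2.4}. Positive definiteness is invariant under orthogonal conjugation, so at a fixed point we rotate coordinates so that $D^2u=\operatorname{diag}(\lambda_1,\cdots,\lambda_n)$. Then $F^{ij}$ is diagonal with entries
\[
F^{ii}=\sum_{j\ne i}\lambda_j+\alpha=\sigma_1(\lambda)-\lambda_i+\alpha .
\]
The task reduces to showing $\sigma_1(\lambda)-\lambda_i+\alpha>0$ for every $i$.

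Fix $i$ and set $s=\sigma_1(\lambda)>0$ and $t=s-\lambda_i=\sum_{j\ne i}\lambda_j$. We want $t+\alpha>0$. Using \eqref{eq:2.1} on the vector $\lambda$ with $\lambda_i$ split off, we can write
\[
\sigma_2(\lambda)=\lambda_i t+\sigma_2(\lambda|i),
\qquad
\sigma_2(\lambda|i)\le \frac{n-2}{2(n-1)}\,t^2,
\]
where the inequality is Maclaurin/Cauchy--Schwarz: $2\sigma_2(\lambda|i)=t^2-|\lambda|i|^2\le t^2-t^2/(n-1)$. The equation gives
\[
0<f=\sigma_2+\alpha s=\lambda_i t+\sigma_2(\lambda|i)+\alpha(\lambda_i+t).
\]

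\textbf{Argument by contradiction.} Suppose $t+\alpha\le0$, so $t\le-\alpha\le0$. Then $\lambda_i=s-t\ge s>0$. The right-hand side rearranges to
\[
\lambda_i(t+\alpha)+\alpha t+\sigma_2(\lambda|i).
\]
The first term is $\le0$, and so is $\alpha t$. The remaining term $\sigma_2(\lambda|i)$ must be controlled, and this is the main obstacle, since it can be positive.

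To handle it, we bound $\sigma_2(\lambda|i)\le\frac{n-2}{2(n-1)}t^2<\frac12t^2$. Because $\lambda_i\ge s-t>-t=|t|$, we get $\lambda_i t\le -t^2$ when $t<0$. The total is then at most
\[
-t^2+\tfrac12t^2+\alpha(\lambda_i+t)=-\tfrac12t^2+\alpha s .
\]
The term $\alpha s$ is still not obviously small, so instead we group it as $\alpha s=\alpha(\lambda_i+t)$ and use $t\le-\alpha$:
\[
\lambda_i(t+\alpha)+\alpha t+\sigma_2(\lambda|i)\le \lambda_i(t+\alpha)-\alpha^2+\tfrac12t^2 .
\]
This still needs care. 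The cleaner route is to write $\lambda_i=s-t$ and obtain
\[
f\le (s-t)(t+\alpha)+\alpha t+\tfrac12t^2=s(t+\alpha)-\tfrac12t^2 .
\]
Here $s>0$ and $t+\alpha\le0$, so $f\le -\tfrac12t^2\le0$, which contradicts $f>0$. In fact the sharper constant $\frac{n-2}{2(n-1)}$ is not needed; the bound $\frac12$ suffices.

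Hence $F^{ii}>0$ for every $i$, and $F^{ij}$ is positive definite. The only delicate point is the algebraic regrouping $f\le s(t+\alpha)-\frac12t^2$. It uses $\alpha\ge0$ only implicitly through $t\le-\alpha\le0$, which guarantees $s-t>0$. In the case $t=0$ we have $\alpha=0$, and the bound still gives $f\le0$, a contradiction.
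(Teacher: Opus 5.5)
Your argument is correct, but it follows a different route from the paper.

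\textbf{The paper's route.} It shifts the Hessian, $K=D^2u+\frac{\alpha}{n-1}I$, so that by \eqref{eq:2.2} the sum operator becomes a pure quadratic one, $\sigma_2(K)=f+\frac{n\alpha^2}{2(n-1)}>0$. Since $\operatorname{tr}K=\Delta u+\frac{n\alpha}{n-1}>0$ (this is where $\alpha\ge0$ is used), $K\in\Gamma_2$. The standard fact $\operatorname{tr}K>|\lambda_i(K)|$ from \eqref{eq:2.1} then makes $(\operatorname{tr}K)I-K=(\Delta u+\alpha)I-D^2u$ positive definite.

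\textbf{Your route.} You split off one eigenvalue of $D^2u$ itself. The real content of your proof is the exact identity
\[
f=s\,F^{ii}-t^2+\sigma_2(\lambda|i),\qquad F^{ii}=t+\alpha,
\]
together with $2\sigma_2(\lambda|i)=t^2-|\lambda|i|^2$. Read directly instead of by contradiction, this gives $s\,F^{ii}\ge f+\frac{n}{2(n-1)}t^2$, or $f+\frac12t^2$ using only $|\lambda|i|^2\ge0$. Hence $F^{ii}\ge f/\Delta u>0$.

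\textbf{What each approach buys.} Your route gives two things the paper's proof does not show.
\begin{itemize}
\item The hypothesis $\alpha\ge0$ is not needed at all. Your closing remark that it enters through $s-t>0$ is inaccurate: the final chain uses only $s>0$, $t+\alpha\le0$, and the bound on $\sigma_2(\lambda|i)$.
\item The smallest eigenvalue of $F^{ij}$ is bounded below quantitatively by $f/\Delta u$.
\end{itemize}
The paper's route is more conceptual in exchange. It reduces the sum operator to $\sigma_2$ of a shifted matrix lying in the standard cone $\Gamma_2$, and then uses only the textbook property of that cone.

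In a clean write-up you should drop the abandoned intermediate estimates (those involving $\alpha s$ and $-\alpha^2$) and state the identity above directly.
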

	
	\begin{proof}
		Fix a point and set $A=D^2u$. Define
		\[
		K=A+\frac{\alpha}{n-1}I=D^2u+\frac{\alpha}{n-1}I.
		\]
		Equations~\eqref{eq:2.2} and \eqref{eq:1.1} give
		\[
		\operatorname{tr} K=\Delta u+\frac{n\alpha}{n-1}>0,\]
		\[
		\sigma_2(K)=\sigma_2(D^2u)+\alpha\Delta u+\frac{n\alpha^2}{2(n-1)}
		=f+\frac{n\alpha^2}{2(n-1)}>0.
		\]
		Thus $K\in\Gamma_2$.
		
		Let $\lambda_i$ be an eigenvalue of $K$. By \eqref{eq:2.1},
		\[
		(\operatorname{tr} K)^2=|K|^2+2\sigma_2(K)>\lambda_i^2.
		\]
		Since $\operatorname{tr}K>0$, this implies $\operatorname{tr}K>|\lambda_i|$ and hence $\operatorname{tr}K-\lambda_i>0$.
		
		Therefore every eigenvalue of $(\operatorname{tr}K)I-K$ is positive, and this matrix is positive definite. Since
		\[
		\begin{aligned}
			(\operatorname{tr}K)I-K
			& =\left(\Delta u+\frac{n\alpha}{n-1}\right)I-\left(D^2u+\frac{\alpha}{n-1}I\right) \\
			& =(\Delta u+\alpha)I-D^2u,
		\end{aligned}
		\]
		which proves \eqref{eq:2.4}. This is exactly the derivative of $\sigma_2(A)+\alpha\operatorname{tr}A$ with respect to $A$.
	\end{proof}
	
	\begin{rema}
		This lemma gives pointwise ellipticity on the positive branch; no uniform ellipticity constant is assumed.
	\end{rema}
	The following interior Hessian estimate is due to Li--Wu \cite{LW};
	see Remarks 1.4 and 3.2 therein.
	\begin{lemm}[Li--Wu \cite{LW}]\label{lem:2.4}
		Let $n\ge2$, and let $v\in C^\infty(B_2^n)$ be a $\Gamma_2$-admissible solution of
		\[
		\sigma_2(D^2v)=\Phi(y,v,Dv)>0
		\qquad\text{in }B_2^n.
		\]
		If $\Phi\in C^{1,1}$, then
		\begin{equation}\label{eq:2.5}
			\sup_{B_{1/2}^n}|D^2v|\le C.
		\end{equation}
		Here $C>0$ depends only on $n$, $\|v\|_{C^1(B_2^n)}$, $\|\Phi\|_{C^{1,1}}$, and $\|1/\Phi\|_{L^\infty}$.
		
		If $\Phi=\Phi(y,z)$ is independent of the gradient variable, it is enough to assume $\Phi\in C^{0,1}$. In this case,
		\begin{equation}\label{eq:2.6}
			\sup_{B_{1/2}^n}|D^2v|\le C,
		\end{equation}
		where $C>0$ depends only on $n$, $\|v\|_{L^\infty(B_2^n)}$, $\|\Phi\|_{C^{0,1}}$, and $\|1/\Phi\|_{L^\infty}$.
	\end{lemm}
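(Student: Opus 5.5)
Lemma~\ref{lem:2.4} is quoted from Li--Wu \cite{LW}, so the proof here amounts to checking that our hypotheses match theirs, namely Remarks 1.4 and 3.2 of \cite{LW}. Still, I would organize a self-contained argument as a Pogorelov-type maximum principle computation. Since $v$ is $\Gamma_2$-admissible and $\Phi>0$, the operator $G^{ij}=\sigma_2^{ij}(D^2v)=\Delta v\,\delta_{ij}-v_{ij}$ is positive definite. This is the case $\alpha=0$ of Lemma~\ref{lem:2.3}.

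The plan is to maximize over $B_1^n$ the test function
\[
P=2\log\rho+\log\lambda_1+\frac{a}{2}|Dv|^2+\frac{b}{2}|y|^2,
\qquad \rho=1-|y|^2,
\]
where $\lambda_1$ is the largest eigenvalue of $D^2v$. At a maximum point I rotate coordinates so that $D^2v$ is diagonal, and replace $\lambda_1$ by a smooth perturbation if it has multiplicity greater than one. I then differentiate the equation twice in the $e_1$ direction. This gives
\[
G^{ii}v_{11ii}\ge -G^{pq,rs}v_{pq1}v_{rs1}+\Phi_{11},
\]
and $\Phi_{11}$ is bounded below by $-C(1+\lambda_1^2)$, using $\Phi\in C^{1,1}$ and the chain rule through $Dv$. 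Inserting this into $0\ge G^{ii}P_{ii}$ produces good terms $a\,G^{ii}\lambda_i^2$ and $b\sum G^{ii}$, together with the concavity term
\[
-\frac{G^{pq,rs}v_{pq1}v_{rs1}}{\lambda_1}
+2\sum_{i>1}\frac{G^{ii}v_{1i1}^2}{\lambda_1(\lambda_1-\lambda_i)}
-\frac{G^{ii}v_{11i}^2}{\lambda_1^2}.
\]
The first-order condition $P_i=0$ is used to trade $v_{11i}/\lambda_1$ for $\rho_i/\rho$ and $a\,v_iv_{ii}$.

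The hard step is to show that this third-order combination is bounded below by $-C\,G^{ii}\rho_i^2/\rho^2$ up to harmless terms, uniformly in $n$ and without any semiconvexity assumption. I would split into two cases.

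\textbf{Case 1: $\lambda_n\ge-\delta\lambda_1$.} Here the standard $\sigma_2$ concavity inequality, in the form
\[
-\sigma_2^{pq,rs}\xi_{pq}\xi_{rs}\ge -\frac{(\sigma_2^{ij}\xi_{ij})^2}{\sigma_2}+\cdots,
\]
combined with $G^{11}\ge c\lambda_1^{-1}\sigma_2$, absorbs the $G^{11}v_{111}^2/\lambda_1^2$ term.

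\textbf{Case 2: $\lambda_n<-\delta\lambda_1$.} The $\sigma_2$ structure then forces $G^{11}\ge c\,\sigma_1$ and $\sum_i G^{ii}\ge c\lambda_1$. In this regime the $b$-term dominates, and the terms $G^{ii}v_{11i}^2$ are controlled directly through $P_i=0$.

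This dichotomy, and in particular choosing $\delta$ so that both cases close for arbitrary $n$, is the genuine contribution of \cite{LW}. If I had to reproduce it, I would try first their refined inequality for $-\sigma_2^{pq,rs}$ applied to the full $n\times n$ block of third derivatives. Once the case analysis closes, the bound $\rho^2\lambda_1\le C$ at the maximum point follows. Evaluating on $B_{1/2}^n$ then gives \eqref{eq:2.5}, with $C$ depending only on $n$, $\|v\|_{C^1}$, $\|\Phi\|_{C^{1,1}}$ and $\|1/\Phi\|_{L^\infty}$.

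For the gradient-independent statement, $\Phi_{11}$ involves no $D^2v$ terms. Lipschitz regularity of $\Phi$ suffices if one differentiates the equation only once and avoids second derivatives of $\Phi$: I would use the Jacobi-type inequality for $b=\log\lambda_1$ in the viscosity/integral sense, or difference quotients. The gradient dependence in the constant is then removed by the interior gradient estimate for $\Gamma_2$-admissible solutions, which bounds $\|Dv\|_{L^\infty(B_{3/2}^n)}$ in terms of $\|v\|_{L^\infty(B_2^n)}$. This gives \eqref{eq:2.6}.
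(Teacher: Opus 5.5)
Your proposal matches the paper, which gives no argument of its own for Lemma~\ref{lem:2.4} and simply imports both parts from Li--Wu \cite{LW} (Remarks 1.4 and 3.2), exactly as your opening sentence does. The Pogorelov-type sketch you add cannot replace that citation. You defer its decisive third-order step to \cite{LW} yourself, the $C^{0,1}$ case is only outlined, and some details are loose: for example, $\Phi_{11}$ contains the third-order term $\Phi_{p_k}v_{11k}$, which is not bounded by $C(1+\lambda_1^2)$ and has to be handled through $P_k=0$. So, as in the paper, the citation is what actually carries the proof.
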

	
	\section{Interior \texorpdfstring{$C^2$}{C2} Estimate}
	The proof of Theorem~\ref{thm:1.1} proceeds by adding one variable and reducing \eqref{eq:1.1} to a quadratic Hessian equation.
	
	Introduce a new variable $t$. If $\alpha$ is constant, the function $u(x)+\alpha t^2/2$ has a block diagonal Hessian, and the block identity for $\sigma_2$ gives the left-hand side of \eqref{eq:1.1}. For variable $\alpha$, two extra terms appear: $t^2D^2\alpha/2$ in the $x$-block and $tD\alpha$ in the mixed block. Since $D^2\alpha=\beta I$, the unwanted Hessian term can be canceled by adding a quartic correction. We define
	\[
	U(x,t)=u(x)+\frac12\alpha(x)t^2+\eta(t),\qquad \theta=\frac{\beta t^2}{2}.
	\]
	The lower-right entry of the Hessian is $\alpha+\eta''$. By \eqref{eq:2.2}--\eqref{eq:2.3}, the coefficient of $\operatorname{tr}(D^2u)$ is
	\[
	(n-1)\theta+\alpha+\eta''(t).
	\]
	To recover the coefficient $\alpha$ in \eqref{eq:1.1}, choose
	\[
	\eta''(t)=-\frac{(n-1)\beta}{2}t^2.
	\]
	With $\eta(0)=\eta'(0)=0$, this gives
	\begin{equation}\label{eq:3.1}
		\psi(x,t):=\frac12\alpha(x)t^2-\frac{(n-1)\beta}{24}t^4,
	\end{equation}
	\begin{equation}\label{eq:3.2}
		U(x,t):=u(x)+\psi(x,t).
	\end{equation}
	Since $\alpha_{ij}=\beta\delta_{ij}$, the derivatives are
	\begin{equation}\label{eq:3.3}
		\psi_i=\frac12\alpha_i t^2,
	\end{equation}
	\begin{equation}\label{eq:3.4}
		\psi_t=\alpha t-\frac{(n-1)\beta}{6}t^3,
	\end{equation}
	\begin{equation}\label{eq:3.5}
		\psi_{ij}=\frac{\beta t^2}{2}\delta_{ij},
	\end{equation}
	\begin{equation}\label{eq:3.6}
		\psi_{it}=t\alpha_i,
	\end{equation}
	\begin{equation}\label{eq:3.7}
		\psi_{tt}=\alpha-\frac{(n-1)\beta}{2}t^2.
	\end{equation}
	Here and below, $i,j$ denote the $x$-variables. Thus
	\begin{equation}\label{eq:3.8}
		D_{x,t}^2U=
		\begin{pmatrix}
			D_x^2u+\frac{\beta t^2}{2}I & tD\alpha                        \\
			t(D\alpha)^T                & \alpha-\frac{(n-1)\beta t^2}{2}
		\end{pmatrix}.
	\end{equation}
	\begin{lemm}\label{lem:3.1}
		For the function defined in \eqref{eq:3.1}--\eqref{eq:3.2},
		\begin{equation}\label{eq:3.9}
			\sigma_2\big(D_{x,t}^2U\big)=\sigma_2\big(D_x^2u\big)+\alpha(x)\Delta_x u+E(x,t),
		\end{equation}
		where
		\begin{equation}\label{eq:3.10}
			E(x,t)=t^2\left(\frac{n\beta\alpha(x)}{2}-|D\alpha(x)|^2\right)-\frac{n(n-1)\beta^2}{8}t^4.
		\end{equation}
		In particular, $E$ depends only on $x$ and $t$.
	\end{lemm}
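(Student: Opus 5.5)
The plan is a direct computation from the block form \eqref{eq:3.8}, applying the two identities of Lemma~\ref{lem:2.1} in turn. Write
\[
A=D_x^2u+\theta I,\qquad \theta=\frac{\beta t^2}{2},\qquad \xi=tD\alpha,\qquad d=\alpha-\frac{(n-1)\beta t^2}{2}.
\]
By \eqref{eq:2.3},
\[
\sigma_2\big(D_{x,t}^2U\big)=\sigma_2(A)+d\operatorname{tr}A-|\xi|^2 .
\]

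First expand $\sigma_2(A)$ with \eqref{eq:2.2}:
\[
\sigma_2(A)=\sigma_2(D^2u)+(n-1)\theta\,\Delta u+\frac{n(n-1)}{2}\theta^2 .
\]
Next expand the trace term, using $\operatorname{tr}A=\Delta u+n\theta$:
\[
d\operatorname{tr}A=\alpha\Delta u-\frac{(n-1)\beta t^2}{2}\Delta u+n\theta\alpha-\frac{n(n-1)\beta t^2}{2}\theta .
\]
Finally, $|\xi|^2=t^2|D\alpha|^2$.

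The key step is to check that the $\Delta u$ terms reduce to $\alpha\Delta u$. The two coefficients $(n-1)\theta=\frac{(n-1)\beta t^2}{2}$ and $-\frac{(n-1)\beta t^2}{2}$ cancel exactly. This is precisely the choice of $\eta''$ made in the construction of $\psi$, so no $D^2u$ dependence survives beyond $\sigma_2(D^2u)+\alpha\Delta u$.

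What remains is to collect the pure $(x,t)$ terms. The $t^2$ terms are
\[
n\theta\alpha-t^2|D\alpha|^2=t^2\Bigl(\frac{n\beta\alpha}{2}-|D\alpha|^2\Bigr).
\]
The $t^4$ terms are
\[
\frac{n(n-1)}{2}\cdot\frac{\beta^2t^4}{4}-\frac{n(n-1)\beta t^2}{2}\cdot\frac{\beta t^2}{2}
=\frac{n(n-1)\beta^2t^4}{8}-\frac{n(n-1)\beta^2t^4}{4}=-\frac{n(n-1)\beta^2}{8}t^4 .
\]
Together these give \eqref{eq:3.10}.

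The only care needed is to confirm that \eqref{eq:3.8} is correct. This uses $D^2\alpha=\beta I$ in the form \eqref{eq:3.5}, so that the $x$-block really is $D^2u+\theta I$ and \eqref{eq:2.2} applies. Once that is checked, the remaining steps are bookkeeping, and I expect no genuine obstacle.
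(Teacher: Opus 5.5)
Your proposal is correct and follows essentially the same route as the paper: apply \eqref{eq:2.3} to the block form \eqref{eq:3.8}, expand with \eqref{eq:2.2}, observe that the two $(n-1)\theta\,\Delta u$ terms cancel, and collect the remaining $t^2$ and $t^4$ terms. Your only difference is notational (your $A$ is $D_x^2u+\theta I$ rather than $D_x^2u$), and your arithmetic, including the $t^4$ coefficient $-\frac{n(n-1)\beta^2}{8}$, checks out.
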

	\begin{proof}
		Let $A=D_x^2u$ and $\theta=\beta t^2/2$. By \eqref{eq:2.2}--\eqref{eq:2.3},
		\[
		\begin{aligned}
			\sigma_2(D^2U)
			& =\sigma_2(A+\theta I)+\big(\alpha-(n-1)\theta\big)\operatorname{tr}(A+\theta I)-t^2|D\alpha|^2             \\
			& =\sigma_2(A)+(n-1)\theta\operatorname{tr}A+\frac{n(n-1)}{2}\theta^2                                           \\
			& \quad +\alpha\operatorname{tr}A-(n-1)\theta\operatorname{tr}A+n\alpha\theta-n(n-1)\theta^2-t^2|D\alpha|^2.
		\end{aligned}
		\]
		The two terms containing $(n-1)\theta\operatorname{tr}A$ cancel. Hence
		\[
		\sigma_2(D^2U)=\sigma_2(A)+\alpha\operatorname{tr}A+n\alpha\theta-\frac{n(n-1)}{2}\theta^2-t^2|D\alpha|^2.
		\]
		Substituting $\theta=\beta t^2/2$ gives
		\[
		n\alpha\theta=\frac{n\beta\alpha}{2}t^2,
		\]
		\[
		\frac{n(n-1)}{2}\theta^2=\frac{n(n-1)\beta^2}{8}t^4,
		\]
		and the result follows.
	\end{proof}
	Let $X=(x,t)\in\mathbb R^{n+1}$ and $P=(p,\tau)\in\mathbb R^n\times\mathbb R$. Define
	\begin{equation}\label{eq:3.11}
		G(X,z,P):=f\bigl(x,z-\psi(x,t),p-D_x\psi(x,t)\bigr)+E(x,t).
	\end{equation}
	Since
	\[
	U-\psi=u,\qquad D_xU-D_x\psi=D_xu,
	\]
	it follows along the lifted solution that
	\begin{equation}\label{eq:3.12}
		\sigma_2(D_{x,t}^2U)=G(X,U,D_{x,t}U).
	\end{equation}
	
	For $|t|\le1$, the explicit formulas \eqref{eq:3.1} and \eqref{eq:3.10} give
	\begin{equation}\label{eq:3.13}
		|\psi|+|D_{x,t}\psi|\le C,
		\qquad
		|E(x,t)|\le Ct^2.
	\end{equation}
	Here $C>0$ depends only on $n$ and $\|\alpha\|_{C^2(B_2^n)}$.
	Consequently, on $B_2^n\times(-1,1)$,
	\begin{equation}\label{eq:3.14}
		|U|+|D_{x,t}U|\le C.
	\end{equation}
	Here $C>0$ depends only on $n$, $\|u\|_{C^1(B_2^n)}$, and $\|\alpha\|_{C^2(B_2^n)}$.  From now on, unless otherwise stated, \(C>0\) denotes a positive constant depending only on \(n\), \(\|u\|_{C^1(B_2^n)}\), \(\|\alpha\|_{C^2(B_2^n)}\), \(\|f\|_{C^{1,1}}\), and \(\|1/f\|_{L^\infty}\); it may change from line to line.
	
	By the estimate for $E$, there exists $h\in(0,1]$, depending only on
	$n$, $\|\alpha\|_{C^2(B_2^n)}$, and $\|1/f\|_{L^\infty}$, such that
	\begin{equation}\label{eq:3.15}
		|E(x,t)|
		\le
		\frac{1}{2\|1/f\|_{L^\infty}}
		\qquad
		\text{in }\Omega_h:=B_2^n\times(-h,h).
	\end{equation}
	Hence, throughout this cylinder and for all \((z,P)\) in the bounded range considered below,
	\begin{equation}\label{eq:3.16}
		G(X,z,P)
		\ge
		\frac{1}{2\|1/f\|_{L^\infty}}>0.
	\end{equation}
	
	\begin{lemm}\label{lem:3.2}
		Under the assumptions of Theorem~\ref{thm:1.1}, $U$ is $\Gamma_2$-admissible on $\Omega_h$.
	\end{lemm}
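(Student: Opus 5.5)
To show $U$ is $\Gamma_2$-admissible on $\Omega_h$, I need $\sigma_1(D^2_{x,t}U)>0$ and $\sigma_2(D^2_{x,t}U)>0$ at every point.

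\textbf{Positivity of $\sigma_2$.} By Lemma~\ref{lem:3.1} and \eqref{eq:1.1},
\[
\sigma_2(D^2_{x,t}U)=f(x,u,Du)+E(x,t).
\]
Since $f\ge 1/\|1/f\|_{L^\infty}$, the bound \eqref{eq:3.15} gives
\[
\sigma_2(D^2_{x,t}U)\ge \frac{1}{2\|1/f\|_{L^\infty}}>0
\]
throughout $\Omega_h$. This is exactly \eqref{eq:3.16} evaluated along the lifted solution.

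\textbf{Positivity of the trace.} From \eqref{eq:3.8},
\[
\operatorname{tr}D^2_{x,t}U=\Delta u+\frac{n\beta t^2}{2}+\alpha-\frac{(n-1)\beta t^2}{2}=\Delta u+\alpha+\frac{\beta t^2}{2}.
\]
We know $\Delta u>0$ (positive branch) and $\alpha\ge0$, but $\beta$ may be negative, so the naive bound fails near points where $\Delta u+\alpha$ is small. This is the main obstacle, and I would avoid it with a continuity argument.

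\textbf{Continuity argument.} Fix $x$. The set $\{\lambda:\sigma_2(\lambda)>0\}$ has $\Gamma_2$ and $-\Gamma_2$ as two of its connected components. Along the path $s\mapsto D^2_{x,t}U(x,s)$ for $s$ between $0$ and $t$, we have $\sigma_2>0$ at every point, by the bound just proved (which holds for all $|s|<h$). So the eigenvalue vector cannot leave its component. At $s=0$ the matrix is $\operatorname{diag}(D^2u,\alpha)$, whose trace is $\Delta u+\alpha>0$ and whose $\sigma_2$ is $f>0$, so it lies in $\Gamma_2$. It remains to justify that the component is preserved. On the closure of $\{\sigma_2>0\}$ with $\sigma_1=0$, identity \eqref{eq:2.1} gives $2\sigma_2=-|\lambda|^2\le0$. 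Hence $\sigma_1$ never vanishes along a path on which $\sigma_2>0$, and $\sigma_1$ stays positive by continuity. Since eigenvalues depend continuously on $t$, this yields $\lambda(D^2_{x,t}U)\in\Gamma_2$ on all of $\Omega_h$.

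\textbf{Ellipticity check.} Alternatively, I could apply the argument of Lemma~\ref{lem:2.3} to see that the matrix $(\operatorname{tr})I-D^2U$ is positive definite once the point lies in $\Gamma_2$, which confirms ellipticity. That is not needed for admissibility itself.
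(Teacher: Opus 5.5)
Your proof is correct and follows essentially the same route as the paper. You get $\sigma_2(D^2_{x,t}U)=f+E>0$ on $\Omega_h$ from Lemma~\ref{lem:3.1} and \eqref{eq:3.15}, and then use $(\operatorname{tr}D^2U)^2=|D^2U|^2+2\sigma_2(D^2U)>0$ to show the trace never vanishes, so for each fixed $x$ it stays positive in $t$, starting from $\Delta u+\alpha>0$ at $t=0$. The explicit trace formula $\Delta u+\alpha+\beta t^2/2$, the connected-components remark, and the ellipticity check are harmless extras that the argument does not need.
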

	\begin{proof}
		At $t=0$, \eqref{eq:3.8} gives
		\[
		D^2U(x,0)=
		\begin{pmatrix}
			D_x^2u(x) & 0         \\
			0         & \alpha(x)
		\end{pmatrix},
		\]
		and hence
		\[
		\operatorname{tr}D^2U(x,0)=\Delta_xu(x)+\alpha(x)>0,
		\]
		while
		\[
		\sigma_2(D^2U(x,0))=f(x,u,Du)>0.
		\]
		By \eqref{eq:3.12} and \eqref{eq:3.16},
		\[
		\sigma_2(D^2U)>0\qquad\text{in }\Omega_h.
		\]
		Using \eqref{eq:2.1},
		\begin{equation}\label{eq:3.17}
			\bigl(\operatorname{tr}D^2U\bigr)^2
			=|D^2U|^2+2\sigma_2(D^2U)>0.
		\end{equation}
		Thus $\operatorname{tr}D^2U$ never vanishes in $\Omega_h$. For each fixed $x$, it is positive at $t=0$ and continuous in $t$, so it remains positive for $|t|<h$. Therefore
		\[
		\sigma_1(D^2U)>0,
		\qquad
		\sigma_2(D^2U)>0,
		\]
		and hence $D^2U\in\Gamma_2$ on $\Omega_h$.
	\end{proof}
	
	Next, we define
\begin{equation}\label{eq:3.18}
	T(X,z,P)
	:=
	\bigl(x,z-\psi(x,t),p-D_x\psi(x,t)\bigr),
\end{equation}
where
\[
X=(x,t)\in\mathbb R^{n+1},
\qquad
P=(p,\tau)\in\mathbb R^n\times\mathbb R.
\]
Notice that only the first $n$ components $p$ of $P$ enter the
definition of $T$, since the original right-hand side $f$ depends on
the $x$-gradient $D_xu$. By definition,
\[
(f\circ T)(X,z,P)
=
f\bigl(x,z-\psi(x,t),p-D_x\psi(x,t)\bigr).
\]
Hence the right-hand side in \eqref{eq:3.12} takes the form
\[
G(X,z,P)
=
(f\circ T)(X,z,P)+E(x,t).
\]

We first establish the $C^{1,1}$ regularity of $G$ on a bounded
neighborhood of the range of the lifted solution. Since
\[
D^2\alpha=\beta I,
\]
the function $\alpha$ is quadratic. Consequently, all derivatives of
$\psi$ which arise when differentiating $T$ are controlled by
$\|\alpha\|_{C^2(B_2^n)}$. In particular,
\[
\psi_{ijk}=0,\qquad
\psi_{ijt}=\beta t\delta_{ij},\qquad
\psi_{itt}=\alpha_i,\qquad
\psi_{ttt}=-(n-1)\beta t.
\]
Together with the first and second derivative formulas for $\psi$,
these identities imply
\begin{equation}\label{eq:3.19}
	\sup\bigl(|DT|+|D^2T|\bigr)\le C.
\end{equation}
Since \(f\in C^{1,1}_{\mathrm{loc}}\), \(f\circ T\) is also \(C^{1,1}\) on this set. Indeed, the first
derivatives satisfy
\[
D(f\circ T)
=
(Df\circ T)\,DT,
\]
while, at points of twice differentiability, the second derivatives
are combinations of terms of the form
\[
(D^2f\circ T)(DT,DT)
\qquad\text{and}\qquad
(Df\circ T)D^2T.
\]
Thus the $C^{1,1}$ bound for $f$, together with
\eqref{eq:3.19}, yields
\begin{equation}\label{eq:3.20}
	\|f\circ T\|_{C^{1,1}}\le C.
\end{equation}

We next estimate the error term. Recall from \eqref{eq:3.10} that
\[
E(x,t)
=
t^2\left(
\frac{n\beta\alpha(x)}{2}
-|D\alpha(x)|^2
\right)
-\frac{n(n-1)\beta^2}{8}t^4.
\]
Since $\alpha$ is quadratic, all of its derivatives of order greater
than two vanish. Direct differentiation of the above expression
therefore gives
\begin{equation}\label{eq:3.21}
	\|E\|_{C^{1,1}(\Omega_h)}\le C.
\end{equation}
Since
\[
G=f\circ T+E,
\]
equations \eqref{eq:3.20} and \eqref{eq:3.21} imply
\begin{equation}\label{eq:3.22}
	\|G\|_{C^{1,1}}\le C
\end{equation}
on a bounded neighborhood of the range of
\[
(X,U,D_{x,t}U).
\]

The function $G$ is needed only near this range. In order to apply
Lemma~\ref{lem:2.4}, however, it is convenient to extend the
right-hand side in the variables $(z,P)$ so that the extension remains
positive and has a uniform $C^{1,1}$ bound.

Choose a smooth cutoff function $\chi=\chi(z,P)$ satisfying
\[
0\le\chi\le1,
\]
such that $\chi=1$ on the range of
\[
(U,D_{x,t}U)
\]
described by \eqref{eq:3.14}, and such that the support of $\chi$ is
contained in a slightly larger bounded domain on which
\eqref{eq:3.16} holds. Define
\begin{equation}\label{eq:3.23}
	\widetilde G(X,z,P)
	:=
	\chi(z,P)G(X,z,P)
	+
	\bigl(1-\chi(z,P)\bigr)
	\frac{1}{2\|1/f\|_{L^\infty}}.
\end{equation}

Since $\chi=1$ along the lifted solution,
\[
\widetilde G(X,U,D_{x,t}U)
=
G(X,U,D_{x,t}U).
\]
Thus replacing $G$ by $\widetilde G$ does not alter the equation
satisfied by $U$.

Moreover, on the support of $\chi$, equation \eqref{eq:3.16} gives
\[
G\ge
\frac{1}{2\|1/f\|_{L^\infty}},
\]
whereas outside this support the second term in
\eqref{eq:3.23} is exactly the same positive constant. Since
$0\le\chi\le1$, it follows that
\begin{equation}\label{eq:3.24}
	\widetilde G
	\ge
	\frac{1}{2\|1/f\|_{L^\infty}}
	>0.
\end{equation}
The smoothness of $\chi$ and \eqref{eq:3.22} further imply
\begin{equation}\label{eq:3.25}
	\|\widetilde G\|_{C^{1,1}}\le C.
\end{equation}

We now localize the lifted equation near an arbitrary point
$x_0\in B_{1/2}^n$. We define
\[
X_0:=(x_0,0)
\]
and
\begin{equation}\label{eq:3.26}
	\rho:=\frac h4.
\end{equation}
Since $h\le1$, we have $\rho\le1/4$. If
\[
X=(x,t)\in B_{2\rho}^{n+1}(X_0),
\]
then
\[
|t|<2\rho=\frac h2<h
\]
and
\[
|x|
\le |x_0|+|x-x_0|
<\frac12+2\rho
\le1.
\]
Therefore
\[
B_{2\rho}^{n+1}(X_0)\subset\Omega_h.
\]
Since $h$ was chosen only in terms of the quantities entering the
estimate,
\begin{equation}\label{eq:3.27}
	\rho^{-1}\le C.
\end{equation}
In particular, the localization radius $\rho$ is bounded from below
uniformly in $x_0$.

To rescale $B_{2\rho}^{n+1}(X_0)$ onto the fixed ball
$B_2^{n+1}$, define
\begin{equation}\label{eq:3.28}
	w(Y)
	:=
	\rho^{-2}U(X_0+\rho Y),
	\qquad
	Y\in B_2^{n+1}.
\end{equation}
A direct differentiation gives
\begin{equation}\label{eq:3.29}
	D_Yw(Y)
	=
	\rho^{-1}D_XU(X_0+\rho Y),
	\qquad
	D_Y^2w(Y)
	=
	D_X^2U(X_0+\rho Y).
\end{equation}
The second identity shows that the Hessian is unchanged by this
normalization. Hence the $\Gamma_2$-admissibility of $U$ established
in Lemma~\ref{lem:3.2} is inherited by $w$.

Define the rescaled right-hand side by
\begin{equation}\label{eq:3.30}
	\Phi_{X_0}(Y,z,P)
	:=
	\widetilde G
	\bigl(X_0+\rho Y,\rho^2z,\rho P\bigr).
\end{equation}
Since
\[
U(X_0+\rho Y)=\rho^2w(Y)
\]
and, by \eqref{eq:3.29},
\[
D_XU(X_0+\rho Y)
=
\rho D_Yw(Y),
\]
the equation for $U$ becomes
\begin{equation}\label{eq:3.31}
	\sigma_2(D_Y^2w)
	=
	\Phi_{X_0}(Y,w,D_Yw)
	\qquad\text{in }B_2^{n+1}.
\end{equation}

It remains to verify the bounds required by
Lemma~\ref{lem:2.4}. From \eqref{eq:3.14},
\eqref{eq:3.27}, and \eqref{eq:3.29},
\[
|w(Y)|
\le
\rho^{-2}\sup|U|,
\qquad
|D_Yw(Y)|
\le
\rho^{-1}\sup|D_XU|.
\]
Therefore
\begin{equation}\label{eq:3.32}
	\|w\|_{C^1(B_2^{n+1})}\le C.
\end{equation}

For the right-hand side, \eqref{eq:3.30} is obtained from
$\widetilde G$ by the affine change of variables
\[
(Y,z,P)
\longmapsto
(X_0+\rho Y,\rho^2z,\rho P).
\]
Since $0<\rho\le1$, differentiation with respect to $Y$, $z$, and
$P$ introduces only factors bounded by $1$. Hence
\eqref{eq:3.24} and \eqref{eq:3.25} yield
\begin{equation}\label{eq:3.33}
	\|\Phi_{X_0}\|_{C^{1,1}}
	+
	\left\|
	\frac1{\Phi_{X_0}}
	\right\|_{L^\infty}
	\le C.
\end{equation}
The constants in \eqref{eq:3.32} and \eqref{eq:3.33} are uniform
with respect to $x_0$.

All hypotheses of the Li--Wu interior Hessian estimate are now
satisfied. Indeed, $w$ is $\Gamma_2$-admissible by
Lemma~\ref{lem:3.2} and \eqref{eq:3.29}, equation
\eqref{eq:3.31} has a positive right-hand side by
\eqref{eq:3.24}, and the required $C^{1,1}$ bounds follow from
\eqref{eq:3.32}--\eqref{eq:3.33}. Applying
Lemma~\ref{lem:2.4} in dimension $n+1$ gives
\[
|D_Y^2w(0)|\le C.
\]
By \eqref{eq:3.29},
\[
D_Y^2w(0)
=
D_X^2U(x_0,0),
\]
and therefore
\[
|D_X^2U(x_0,0)|\le C.
\]

Finally, setting $t=0$ in \eqref{eq:3.8} gives
\[
D_X^2U(x_0,0)
=
\begin{pmatrix}
	D_x^2u(x_0) & 0\\
	0 & \alpha(x_0)
\end{pmatrix}.
\]
Hence
\[
|D_x^2u(x_0)|
\le
|D_X^2U(x_0,0)|
\le C.
\]
Since $x_0\in B_{1/2}^n$ was arbitrary,
\[
\sup_{B_{1/2}^n}|D^2u|\le C.
\]
This proves \eqref{eq:1.6} and completes the proof of
Theorem~\ref{thm:1.1}.
\qed
	
	\begin{coro}[The gradient-independent case]\label{cor:3.3}
		Assume \eqref{eq:1.2}, and let
		\[
		f=f(x,z)\in C^{0,1}_{\mathrm{loc}}(B_2^n\times\mathbb R),
		\qquad f>0.
		\]
		If $u\in C^\infty(B_2^n)$ is a solution of \eqref{eq:1.1} satisfying
		\[
		\Delta u>0\qquad\text{in }B_2^n,
		\]
		then
		\begin{equation}\label{eq:3.34}
			\sup_{B_{1/2}^n}|D^2u|\le C.
		\end{equation}
		Here $C>0$ depends only on $n$, $\|u\|_{L^\infty(B_2^n)}$, $\|\alpha\|_{C^2(B_2^n)}$, $\|f\|_{C^{0,1}}$, and $\|1/f\|_{L^\infty}$. 
	\end{coro}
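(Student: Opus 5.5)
We follow the proof of Theorem~\ref{thm:1.1} essentially verbatim, replacing the gradient-dependent part of Lemma~\ref{lem:2.4} by its gradient-independent version \eqref{eq:2.6}. Lift $u$ to $U(x,t)=u(x)+\psi(x,t)$ with $\psi$ as in \eqref{eq:3.1}. Lemma~\ref{lem:3.1} then gives
\[
\sigma_2(D^2_{x,t}U)=f(x,u)+E(x,t)=:G(X,U),\qquad G(X,z):=f\bigl(x,z-\psi(x,t)\bigr)+E(x,t).
\]
The new right-hand side depends only on $(X,z)$. This is the key point: since $f$ does not involve $Du$, no shift by $D_x\psi$ appears, and $G$ is independent of $P$.

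Choose $h$ exactly as in \eqref{eq:3.15}; the choice depends only on $n$, $\|\alpha\|_{C^2}$ and $\|1/f\|_{L^\infty}$. Then $G\ge 1/(2\|1/f\|_{L^\infty})$ on $\Omega_h$, and Lemma~\ref{lem:3.2} applies unchanged, using $\Delta u>0$, $\alpha\ge0$ and $f>0$. So $U$ is $\Gamma_2$-admissible on $\Omega_h$.

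Regularity of $G$ now needs only one derivative. The map $T(X,z)=(x,z-\psi(x,t))$ has $|DT|\le C(n,\|\alpha\|_{C^2})$ for $|t|\le1$. Hence $f\circ T$ is Lipschitz with constant controlled by $\|f\|_{C^{0,1}}$, and $E$ is smooth with bounds in $\|\alpha\|_{C^2}$. Thus $\|G\|_{C^{0,1}}\le C$ on $\Omega_h\times\{|z|\le M\}$, where $M$ bounds $|U|$. Note $|U|\le\|u\|_{L^\infty}+C(\|\alpha\|_{C^2})$, so only $\|u\|_{L^\infty}$ enters, not $\|u\|_{C^1}$. Cut off in $z$ alone, as in \eqref{eq:3.23}, to get $\widetilde G(X,z)$ that is positive and Lipschitz on all of $\Omega_h\times\mathbb R$.

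Localize at $X_0=(x_0,0)$ with $\rho=h/4$ and set $w(Y)=\rho^{-2}U(X_0+\rho Y)$. Then
\[
\sigma_2(D^2w)=\Phi(Y,w),\qquad \Phi(Y,z)=\widetilde G(X_0+\rho Y,\rho^2 z).
\]
This $\Phi$ is gradient independent, with $\|\Phi\|_{C^{0,1}}+\|1/\Phi\|_{L^\infty}\le C$ and $\|w\|_{L^\infty}\le\rho^{-2}\sup|U|\le C$. Applying \eqref{eq:2.6} in dimension $n+1$ gives $|D^2w(0)|\le C$. Since $D^2w(0)=D^2U(x_0,0)=\operatorname{diag}(D^2u(x_0),\alpha(x_0))$, this yields \eqref{eq:3.34}.

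The step that needs the most care is tracking the dependence of the constants. We must make sure no $\|Du\|_{L^\infty}$ sneaks in. The range of $D_{x,t}U$ no longer needs to be controlled, because $\widetilde G$ does not depend on $P$ and the cutoff is only in $z$. Likewise $\rho$ depends only on $n$, $\|\alpha\|_{C^2}$ and $\|1/f\|_{L^\infty}$. Everything else is a direct repetition of the argument for Theorem~\ref{thm:1.1}.
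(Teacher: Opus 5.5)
Your proposal is correct and follows the paper's proof of Corollary~\ref{cor:3.3}. The lifted right-hand side $G(X,z)=f\bigl(x,z-\psi(x,t)\bigr)+E(x,t)$ is gradient independent, so only $\|u\|_{L^\infty(B_2^n)}$ is needed to control $U$ and the cutoff can be taken in $z$ alone; after the same localization at $(x_0,0)$ with $\rho=h/4$, rescaling, and an application of the gradient-independent part \eqref{eq:2.6} of Lemma~\ref{lem:2.4} in dimension $n+1$, you obtain \eqref{eq:3.34}, with your explicit check that $h$, $\rho$, and the Lipschitz bound on $\widetilde G$ involve no bound on $Du$ spelling out what the paper's short proof leaves implicit.
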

	
	\begin{proof}
		In this case \eqref{eq:3.11} becomes
		\[
		G(X,z)=f\bigl(x,z-\psi(x,t)\bigr)+E(x,t),
		\]
		which is independent of the gradient variable. The construction above uses only $\|u\|_{L^\infty(B_2^n)}$ to control $U$, and the cutoff can be chosen to depend only on $z$. Hence the extended right-hand side remains independent of the gradient variable and satisfies
		\[
		\|\widetilde G\|_{C^{0,1}}
		+\left\|\frac1{\widetilde G}\right\|_{L^\infty}\le C.
		\]
		Here $C>0$ depends only on the quantities listed in Corollary~\ref{cor:3.3}.
		After the same localization and scaling, the gradient-independent part of Lemma~\ref{lem:2.4} gives \eqref{eq:3.34}.
	\end{proof}

	\section{Examples and Remarks on the Coefficient Condition}
	
	\subsection{Affine coefficients}
	If $\alpha(x)=a_0+b\cdot x\ge0$, then $\beta=0$. In this case the lifting becomes
	\begin{equation}\label{eq:4.1}
		U(x,t)=u(x)+\frac12\alpha(x)t^2,
		\qquad
		\sigma_2(D^2U)=f(x,u,Du)-t^2|b|^2.
	\end{equation}
	Since $f$ has a positive lower bound on the bounded range used in the lifting argument, the lifted right-hand side remains positive after restricting to a sufficiently thin strip in the $t$-direction. No smallness assumption on $b$ is needed; the width of the strip and the final estimate depend on $|b|$ through $\|\alpha\|_{C^2}$.
	
	Examples on $B_2^n$ satisfying Theorem~\ref{thm:1.1} include
	\[
	\alpha(x)=3+x_1,\qquad \alpha(x)=1+|x|^2,
	\qquad \alpha(x)=5-|x|^2.
	\]
	These examples cover the affine, convex quadratic, and concave quadratic cases. The key assumption is $D^2\alpha=\beta I$, not the convexity or concavity of $\alpha$.
	
	\subsection{Why a direct shift is not enough}
	For an arbitrary smooth $\alpha$, the identity
	\begin{equation}\label{eq:4.2}
		\sigma_2\!\left(D^2u+\frac{\alpha(x)}{n-1}I\right)
		=f(x,u,Du)+\frac{n\alpha(x)^2}{2(n-1)}
	\end{equation}
	still holds. One may try to absorb the added scalar matrix into the Hessian of a new function. If
	$D^2\varphi=\alpha(x)I/(n-1)$, then for any $j$ and any $i\ne j$,
	\[
	\frac{\partial_j\alpha}{n-1}=\partial_j\varphi_{ii}
	=\partial_i\varphi_{ij}=0.
	\]
	Hence, on a connected domain, such a function $\varphi$ exists only when $\alpha$ is constant. The lifting used above avoids this problem by introducing an extra variable and allowing mixed derivatives.
	
	For a general coefficient $\alpha$, consider the simpler lifting $U=u+\alpha(x)t^2/2$. Let $A=D^2u$ and $B=D^2\alpha$. A direct calculation gives
	\begin{align}\label{eq:4.3}
		\sigma_2(D^2U)
		={} & f+\frac{t^2}{2}\bigl((\tr B)\tr A-\tr(BA)\bigr)\notag \\
		& +\frac{\alpha t^2}{2}\tr B-t^2|D\alpha|^2
		+\frac{t^4}{4}\sigma_2(B).
	\end{align}
	If $B=\beta I$, the term containing $D^2u$ reduces to $(n-1)\beta t^2\Delta u/2$, and the quartic correction cancels it. For a general $B$, the term $\operatorname{tr}(BA)$ remains. Thus the condition $D^2\alpha=\beta I$ is needed for this lifting. This calculation does not give a necessary condition for other methods.
	
	\subsection{An \texorpdfstring{$L^\infty$}{L-infinity} bound on the coefficient is not enough}
	The following example shows that an $L^\infty$ bound on $\alpha$ alone is not sufficient for an interior $C^2$ estimate in a general coefficient class.
	
	\begin{lemm}\label{lem:4.1}
		For every $n\ge2$, there exist smooth functions $\alpha_\varepsilon>0$ and
		$u_\varepsilon$ on $B_2^n$ such that
		\[
		\sigma_2(D^2u_\varepsilon)+\alpha_\varepsilon\Delta u_\varepsilon=1,
		\qquad \Delta u_\varepsilon>0,
		\]
		and $|\alpha_\varepsilon|_{L^\infty}+
		|u_\varepsilon|_{C^1}$ is uniformly bounded, whereas
		$(u_\varepsilon)_{11}(0)\to\infty$.
	\end{lemm}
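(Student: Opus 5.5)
The plan is to reverse-engineer the coefficient. Given any smooth $u$ with $\Delta u>0$ and $1-\sigma_2(D^2u)>0$ on $B_2^n$, define
\[
\alpha:=\frac{1-\sigma_2(D^2u)}{\Delta u}.
\]
Then $\alpha$ is smooth and positive, and the equation $\sigma_2(D^2u)+\alpha\Delta u=1$ holds by construction. So the whole problem reduces to choosing a family $u_\varepsilon$ with four properties: a bounded $C^1$ norm, a blowing-up second derivative at the origin, $\Delta u_\varepsilon>0$, and $(1-\sigma_2)/\Delta u$ uniformly bounded. The last property forces $|\sigma_2|$ to grow at most like $\Delta u$ wherever the Hessian becomes large.

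I would take a function that is convex and nearly singular in $x_1$ and mildly concave in the remaining variables. Write $x=(x_1,x')$ and set
\[
g_\varepsilon(s)=\frac{s^2}{2}+\sqrt{s^2+\varepsilon^2},\qquad
u_\varepsilon(x)=g_\varepsilon(x_1)-\frac{\delta}{2}|x'|^2,\qquad \delta=\frac{1}{2(n-1)}.
\]
On $B_2^n$ we have $|g_\varepsilon|+|g_\varepsilon'|\le C$, independent of $\varepsilon\in(0,1]$, so $|u_\varepsilon|_{C^1}$ is uniformly bounded. The second derivative is
\[
g_\varepsilon''(s)=1+\frac{\varepsilon^2}{(s^2+\varepsilon^2)^{3/2}}\in\Bigl[1,\,1+\tfrac1\varepsilon\Bigr],
\]
with $g_\varepsilon''(0)=1+1/\varepsilon\to\infty$. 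This gives $(u_\varepsilon)_{11}(0)\to\infty$.

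The Hessian is diagonal with eigenvalues $\mu:=g_\varepsilon''$ and $-\delta$, the latter repeated $n-1$ times. Hence
\[
\Delta u_\varepsilon=\mu-\tfrac12\ \ge\ \tfrac{\mu}{2}>0,\qquad
\sigma_2=-\tfrac12\mu+\binom{n-1}{2}\delta^2=-\tfrac{\mu}{2}+\frac{n-2}{8(n-1)}.
\]
It follows that $1-\sigma_2=1-\frac{n-2}{8(n-1)}+\frac{\mu}{2}$. This lies between $\tfrac78+\frac\mu2>0$ and $1+\frac{\mu}2$. Therefore
\[
0<\alpha_\varepsilon\le\frac{1+\mu/2}{\mu/2}=1+\frac2\mu\le 3,
\]
using $\mu\ge1$. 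This gives the uniform $L^\infty$ bound and the positivity of $\alpha_\varepsilon$.

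The only delicate point is the balance in the choice of $u_\varepsilon$. A purely one-dimensional $u$ gives $\sigma_2=0$, and then $\alpha=1/\Delta u$ degenerates wherever $\Delta u$ is small. A convex profile in $x'$ makes $\sigma_2$ grow like $\mu$ with the wrong sign, so that $1-\sigma_2<0$. The additive $s^2/2$ keeps $\mu\ge1$, and the concave $x'$-part makes $\sigma_2\approx-\mu/2$, so that $1-\sigma_2$ and $\Delta u$ are comparable. With the explicit formulas above, the remaining checks are routine: smoothness of $\alpha_\varepsilon$ follows because $\Delta u_\varepsilon>0$.
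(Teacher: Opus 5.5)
Your proposal is correct and is essentially the paper's construction. The paper likewise takes a near-singular convex profile in $x_1$ plus a concave quadratic in $x'$, namely $u_\varepsilon=x_1^2+\sqrt{x_1^2+\varepsilon^2}-\frac{1}{2(n-1)}|x'|^2$, for which $\sigma_2(D^2u_\varepsilon)+\Delta u_\varepsilon\equiv-\kappa$, and defines $\alpha_\varepsilon=1+(1+\kappa)/\Delta u_\varepsilon$, which is exactly your $(1-\sigma_2)/\Delta u$; only the constants differ, giving $\alpha_\varepsilon\in(1,2+\kappa]$ there versus $\alpha_\varepsilon\in(\tfrac12,3]$ for you. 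As a side note, your heuristic remark overstates the need for the concave $x'$-part: because $\mu\ge1$, the purely one-dimensional profile already gives $\alpha=1/\mu\in(0,1]$, so the concave part only buys a uniform positive lower bound on $\alpha_\varepsilon$, which the lemma does not require.
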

	
	\begin{proof}
		For $0<\varepsilon<1$, we define
		\[
		u_\varepsilon(x):=x_1^2+\sqrt{x_1^2+\varepsilon^2}
		-\frac{1}{2(n-1)}\sum_{i=2}^n x_i^2,
		\qquad \kappa:=\frac{n}{2(n-1)},
		\]
		\[
		L_\varepsilon:=2+
		\frac{\varepsilon^2}{(x_1^2+\varepsilon^2)^{3/2}},
		\qquad s_\varepsilon:=\Delta u_\varepsilon=L_\varepsilon-1>0.
		\]
		Then
		\[
		D^2u_\varepsilon=\operatorname{diag}
		\left(L_\varepsilon,-\frac1{n-1},\cdots,-\frac1{n-1}\right),
		\quad
		\sigma_2(D^2u_\varepsilon)+s_\varepsilon=-\kappa.
		\]
		Define
		\[
		\alpha_\varepsilon(x):=1+\frac{1+\kappa}{s_\varepsilon(x)}.
		\]
		Then $1<\alpha_\varepsilon\le2+\kappa$ and
		$\sigma_2(D^2u_\varepsilon)+\alpha_\varepsilon s_\varepsilon=1$.
		Moreover, $u_\varepsilon$ and $Du_\varepsilon$ are uniformly bounded in $B_2^n$, while
		\[
		(u_\varepsilon)_{11}(0)=2+\varepsilon^{-1}\longrightarrow\infty.
		\]
	\end{proof}
	
	For this family, the derivatives of $\alpha_\varepsilon$ are not uniformly bounded. In fact,
	\[
	\partial_{11}\alpha_\varepsilon(0)
	=\frac{3(1+\kappa)}{\varepsilon(1+\varepsilon)^2}
	\longrightarrow\infty.
	\]
	Thus the example does not rule out estimates that also depend on a $C^{1,1}$ bound for $\alpha$. The present argument does not address whether \eqref{eq:1.2} can be replaced by a more general structural assumption.

\end{document}